\newcommand{\numberset}{\mathbb} 
\newcommand{\R}{\numberset{R}}
\numberwithin{equation}{section}
\newtheorem{thm}{\indent\bf {Theorem}}[section]
\theoremstyle{definition}
\newtheorem{rmk}[thm]{\indent \textsc {Remark}}
\begin{document}

\def\author@andify{%
   \nxandlist {\unskip ,\penalty-1 \space\ignorespaces}%
     {\unskip {} }%
     {\unskip ,\penalty-2 \space }%
}
\title[Improved Hardy inequalities]{Improved Hardy inequalities with a class of weights}

\author[A. Canale]{Anna Canale}
\address{Dipartimento di Matematica,
Università degli Studi di Salerno, \break
Via Giovanni Paolo II, 132, 84084 Fisciano
(Sa), Italy.
}

\thanks{{\it Key words and phrases}. Improved Hardy inequality, weight functions, 
singular potentials, Kolmogorov operators.\\
The author is member of the Gruppo Nazionale per l'Analisi Matematica, 
la Probabilit\'a e le loro Applicazioni 
(GNAMPA) of the Istituto Nazionale di Alta Matematica (INdAM)}

\subjclass[2010]{35K15, 35K65, 35B25, 34G10, 47D03}

\maketitle

\bigskip

\begin{abstract}

In the paper we state conditions on potentials $V$ to get the improved 
Hardy inequality with weight 
\begin{equation*}
\begin{split}
c_{N,\mu}\int_{\R^N}\frac{\varphi^2}{|x|^2}\mu(x)dx&+
\int_{\R^N}V\,\varphi^2\mu(x)dx
\\&\le 
\int_{\R^N}|\nabla \varphi|^2\mu(x)dx
+C_\mu\int_{\R^N} \varphi^2\mu(x)dx,
\end{split}
\end{equation*} 
for functions $\varphi$ in a weighted Sobolev space 
and for weight functions $\mu$ of a quite general type.
Some local improved Hardy inequalities are also given.
To get the results we use a generalized vector field method.

\end{abstract}

\maketitle

\bigskip


\section{Introduction}

\bigskip

The paper deals with local and non-local improved Hardy inequalities with a class of weights $\mu$ 
of a quite general type and with inverse square potentials perturbed by a function $V$.
The paper fits into the context of Hardy type inequalities with weight stated in \cite{CA Hardy type}. 

The classical Hardy inequality was introduced in 1920th \cite{Hardy20} in the one dimensional case (see also \cite{Hardy25, HLP}). 


The classical Hardy inequality is (see, e.g., \cite{D, KMP, KO} for historical reviews, and \cite{M}).
\begin{equation}\label{iHi introd}
c_{o}(N)\int_{\R^N}\frac{\varphi^2}{|x|^2}dx
\le \int_{\R^N}|\nabla \varphi|^2 dx
\end{equation}
for any functions $\varphi\in H^1(\R^N)$, where $c_{o}(N)=\bigl(\frac{N-2}{2}\bigr)^2$ is the optimal constant. 

Weighted Hardy inequalities with optimal constant depending on weight function $\mu$ have been stated in \cite{GGR, CP} with Gaussian measures
and inverse square potentials with a single pole and in the multipolar case, respectively. 
In a setting of more general measures we refer to \cite{CGRT, CPT1, CPT2}, 
in  the last paper with multipolar potentials.

In particular in \cite{CPT1} the authors proved the inequality

\begin{equation}\label{wHi intro}
c_{N,\mu}\int_{\R^N}\frac{\varphi^2}{|x|^2}\,\mu(x)dx
\le \int_{\R^N}|\nabla \varphi|^2\,\mu(x)dx+
C_\mu \int_{\R^N} \varphi^2\,\mu(x)dx,
\end{equation}
for any functions $\varphi$ in a weighted Sobolev space  
with $c_{N,\mu}=\bigl(\frac{N+K_\mu-2}{2}\bigr)^2$, optimal constant, and $K_\mu$, $C_\mu$ constants depending on $\mu$.  For example when $\mu=\frac{1}{|x|^\gamma}$, $\gamma<
N-2$, it occurs $K_\mu=-\gamma$ and $C_\mu=0$ while if  $\mu= e^{-\delta|x|^2}$, $\delta>0$, we get $K_\mu=0$ (see \cite{CPT1}).

In this paper we improve this results by adding a nonnegative correction term
in the left-hand side in (\ref{wHi intro}).
  
In particular we state sufficient conditions on $V$ to get in $\R^N$ the estimate

\begin{equation}\label{wiHi introd}
c_{N,\mu}\int_{\R^N}\frac{\varphi^2}{|x|^2}\,d\mu+
\int_{\R^N}V\,\varphi^2\,d\mu
\le \int_{\R^N}|\nabla \varphi|^2\,d\mu+
C_\mu \int_{\R^N} \varphi^2\,d\mu,
\end{equation}
 where $d\mu=\mu(x)dx$, for any functions $\varphi$ in a suitable Sobolev space with weight satisfying suitable local integrability assumptions and $c_{N,\mu}$ the constant in (\ref{wHi intro}).

To prove the result we use a method introduced in \cite{CA improved} 
for a class of weights satisfying the H\"older condition.
In this paper we prove the result in the context of more general measures.

Example of weight functions are shown in the paper.
 
In the case of the Lebesgue measure there is a very huge literature on the extension of Hardy's inequality. In particular
the improved version of the classical Hardy 
inequality in bounded domain $\Omega$ in $\R^N$, $N\ge 3$, 
\begin{equation}\label{BV}
\left(\frac {N-2}{2}\right)^2
\int_{\Omega}\frac{\varphi^2}{|x|^2}\,dx+
c_\Omega\int_{\Omega}\,\varphi^2\,dx
\le \int_{\Omega}|\nabla \varphi|^2\,dx
\end{equation}
has been stated in \cite{BV} for all $\varphi \in H^1_0 (\Omega)$.

Later improvements of the estimate (\ref{BV}) of the type

\begin{equation}\label{with V}
\left(\frac {N-2}{2}\right)^2
\int_{\Omega}\frac{\varphi^2}{|x|^2}\,dx+\int_{\Omega}V\,\varphi^2\,dx
\le \int_{\Omega}|\nabla \varphi|^2\,dx
\end{equation}
can be found, for example, in \cite{Adimur, Filippas Ter, Gouss, Musina}.

Reasoning as in the proof of Theorem \ref{wiHi}, 
we deduce weighted versions of the local estimate
(\ref{with V}) for some functions $V$, well-known inequalities when $\mu=1$. In particular we focus our attention on the inequalities 

\begin{equation}\label{log w intro}
\begin{split}
c_{N,\mu}&
\int_{B_1}\frac{\varphi^2}{|x|^2}\,d\mu+\frac{1}{4}\int_{B_1}
\frac{\varphi^2}{|x|^2|\log |x||^2}\,d\mu
\\&\le 
\int_{B_1}|\nabla \varphi|^2\,d\mu+C_\mu\int_{B_1} \varphi^2 \,d\mu
\end{split}
\end{equation}
and, for $\beta\in (0,2]$,
\begin{equation}\label{1- r beta w intro}
c_{N,\mu}
\int_{B_1}\frac{\varphi^2}{|x|^2}\,d\mu+\beta^2\int_{B_1}
\frac{\varphi^2}{|x|^{2-\beta}}\,d\mu
\int_{B_1}|\nabla \varphi|^2\,d\mu+C_\mu\int_{B_1} \varphi^2 \,d\mu
\end{equation}
for any functions $\varphi \in C^\infty_c(B_1)$, 
where $B_1$ is the unit ball in $\R^N$.
For $\beta=2$ we get the weighted version of (\ref{BV}) with $4$ in place of $c_\Omega$.

The Hardy inequalities are applied in many fields.
From a mathematical point of view, a motivation for us to study Hardy inequalities with weights and related improvements is due to 
the applications to evolution problems

$$
(P)\quad \left\{\begin{array}{ll}
\partial_tu(x,t)=Lu(x,t)+{\tilde V}(x)u(x,t),\quad \,x\in {\mathbb R}^N, t>0,\\
u(\cdot ,0)=u_0\geq 0\in L_\mu^2
\end{array}
\right. $$
where $L_\mu^2:=L^2(\R^N, d\mu)$ and $L$ is the 
Kolmogorov operator
\begin{equation}\label{L Kolmogorov}
Lu=\Delta u+\frac{\nabla \mu}{\mu}\cdot\nabla u,
\end{equation}
defined on smooth functions, perturbed by singular potentials ${\tilde V}$. 

An existence result can be obtained, reasoning as \cite{CGRT}, following
Cabr\'e-Martel's approach based on the relation between the weak solution of $(P)$
and the estimate of the {\it bottom of the spectrum} of the operator $-(L+{\tilde V})$
\begin{equation*}
\lambda_1(L+{\tilde V}):=\inf_{\varphi \in H^1_\mu\setminus \{0\}}
\left(\frac{\int_{{\mathbb R}^N}|\nabla \varphi |^2\,d\mu
-\int_{{\mathbb R}^N}{\tilde V}\varphi^2\,d\mu}{\int_{{\mathbb R}^N}\varphi^2\,d\mu}
\right)
\end{equation*}
that results from the Hardy inequality.
In the case $\mu=1$ Cabr\'e and Martel in \cite{CabreMartel}
showed that the boundedness  of 
 $\lambda_1(\Delta+{\tilde V})$ is a necessary 
and sufficient condition for the existence of positive exponentially bounded in time
solutions to the associated initial value problem. 
Later in \cite{GGR, CGRT, CPT2} similar results have been extended to Kolmogorov operators 
perturbed by inverse square potentials, in the last paper in the multipolar case.
The proof uses some properties of the operator $L$ and of its corresponding semigroup
in $L_\mu^2(\R^N)$.  

In the paper we include the existence result for the sake of completeness. 

The paper is organized as follows.

In Section 2 we introduce the class of weights and the conditions on the potentials $V$ 
with some examples.
In Section 3 we state the improved weighted Hardy inequality and some consequences.
Section 4 is devoted to the weighted local estimates.
Finally, in Section 5, we show an application of the estimates to evolution problems.

\bigskip\bigskip

\section{ A class of weight functions and potentials}

\bigskip

Let $\mu\ge 0$ be a weight function on $\R^N$. We define the weighted Sobolev space 
$H^1_\mu=H^1(\R^N, \mu(x)dx)$
as the space of functions in $L^2_\mu:=L^2(\R^N, \mu(x)dx)$ whose weak derivatives belong to
$L_\mu^2$.

The class of function $\mu$  we consider fulfills the conditions
\medskip
\begin{itemize}
\item[$H_1)$] 
\begin{itemize}
\item[$i)$] $\quad \sqrt{\mu}\in H^1_{loc}(\R^N)$;
\item[$ii)$]  $\quad \mu^{-1}\in L_{loc}^1(\R^N)$.
\end{itemize}
\end{itemize}
Let us observe that under the assumption $i)$ $H_1)$ we get
$\mu, \nabla \mu \in L_{loc}^1(\R^N)$.
The reason we suppose $H_1)$ is that we need the density of
the space $C_c^{\infty}(\R^N)$ in $H_{\mu}^1$ (see e.g. \cite{T}). So we can regard
$H_{\mu}^1$ as the completion of $C_c^{\infty}(\R^N)$ with respect to the Sobolev norm
$$
\|\cdot\|_{H^1_\mu}^2 := \|\cdot\|_{L^2_\mu}^2 + \|\nabla \cdot\|_{L^2_\mu}^2.
$$
We introduce, in the proof of the Hardy inequality in the next Section, the function
$f=\frac{g}{|x|^\alpha}$, $g$ radial function, for suitable values of $\alpha$.
We need the following condition on $g$.

\smallskip

\begin{itemize}
\item[$H_2)$] 
\begin{itemize}
\item[$i)$] $\quad   g>0, \quad  \displaystyle-\frac{1}{g}\frac{\partial g}{\partial x_j}
\sqrt{\mu}\in  L^2_{loc}(\R^N),\>
\frac{1}{g}\frac{\partial ^2 g}{\partial x_j^2}\mu\in L^1_{loc}(\R^N)$;
\item[$ii)$] $\quad   \displaystyle-\frac{\Delta g}{g}
+(N-2)\frac{x}{|x|^2}
\cdot \frac{\nabla g}{g}\ge 0$.
\end{itemize}
\end{itemize}

\smallskip

\noindent The assumption on the potential $V$ in the estimates is the following

\smallskip

\begin{itemize}
\item[$H_3)$] $\quad  V=V(x)\in L^1_{loc}(\R^N)$ and
$$      
0\le V\le W:=-\frac{\Delta g}{g}+(N-2)\frac{x}{|x|^2}\cdot \frac{\nabla g}{g}=
-\frac{g''}{g}-\frac{g'}{\rho g},
$$
where $g'$, $g''$ are the first and the second derivatives with respect to $\rho=|x|$, respectively.
\end{itemize}
Under condition $i)$ in $H_2$ we can integrate by parts in the proof of the inequality in the next Section.
The class of radial functions $g$ satisfying $ii)$ in $H_2)$ is such that
\begin{equation}\label{g' r}
(g'\rho)'\le 0
\end{equation}
which implies that $ g'\rho$ is decreasing, so we have
\begin{equation}\label{cond g}
g'(r)\,r\le g'(r_0)\,r_0, \quad\qquad r_0\le r.
\end{equation}
If $g\in C^2(\R^N\setminus\{0\})$ this condition involves
that the function \break
$g(r)-c_1 \log r$ is decreasing
$$
g(r)-c_1 \log r< g(r_0)-c_1 \log r_0, \qquad
c_1=g'(r_0)r_0,
$$
as we can see integrating (\ref{cond g}) in $[r_0, r]$, $r_0> 0$.

Functions satisfying condition (\ref{g' r}) in $(0,R)$, $R<1$, for example,
are the functions $g(r)=|\log r|^\beta$, $\beta\in (0,1)$, $g(r)=1-r^\beta$, $\beta\in (0,2]$. 

The functions $W$ such that there exists a positive radial solution of the Bessel equation 
associated to the potential $W$
\begin{equation}\label{Bessel eq}
g'' +\frac{g'}{r}+W g=0
\end{equation}
are good functions. We observe that, if $W=1$, the Bessel function $J_0$ is a 
positive solution of the equation (\ref {Bessel eq}).

The author in \cite{Gouss} proved that, under suitable hypotheses, the equation
 (\ref{Bessel eq}) is a necessary and sufficient condition to get an
improved Hardy inequality in bounded domains in $\R^N$. 

A further assumption we need is the following.

\smallskip

\begin{itemize}
\item[$H_4)$] $\quad$ There exist constants $K_1, K_2, K_3\in \R$,  $K_2>2-N$
and $K_3= K_2$ if $K_2\ne 0$, $K_3\le0$ if $K_2= 0$, such that
$$
\left(\frac{\nabla g}{g}- \alpha\frac{x}{|x|^2}\right)\cdot \frac{\nabla\mu}{\mu}\le
K_1-\frac{\alpha K_2}{|x|^2}
+K_3\frac{x}{|x|^2}\cdot\frac{\nabla g}{g}
$$
\end{itemize}

\smallskip

\noindent or, equivalently, such that the function $g=g(r)$ satisfies the inequality
\begin{equation}\label{radial condition}
\frac{g'}{ g}\left(\frac{\mu'}{\mu}-\frac{K_3}{r}\right)\le K_1+
\frac{\alpha}{r}\left(\frac{\mu'}{\mu}-\frac{K_2}{r}\right).
\end{equation}
For $g$ fixed, it is a condition for $\mu$.  For example, if $g=1$, 
weight functions satisfying $H_4)$ are the functions
\begin{equation}\label {ex w}
\mu(x)=\frac{1}{|x|^\gamma}e^{-\delta |x|^m}, \quad \delta \ge 0, \quad \gamma<N-2,
\end{equation} 
for suitable values of $m$ (see \cite {CPT1}).
Conversely, for $\mu$ fixed, $H_4)$ represents a condition on $g$.

Finally, we remark that the weights in (\ref{ex w}) fulfill condition $H_1)$.

\bigskip\bigskip

\section{ Weighted improved Hardy inequalities}

\bigskip

In this Section we state a weighted improved Hardy inequality in the setting of more general measure 
with respect to \cite{CA improved}. This allow us to improve the results in \cite {CPT1} 
on weighted Hardy inequalities by adding a nonnegative correction term in the estimates.

The method to get the result has been introduced in \cite{CA improved} for a class of weights 
satisfying the H\"older condition.  We enlarge the class of weights for which we can state the result. 
For this class a weighted Hardy inequality with a different method has been stated in \cite{CPT1}.

The next result states sufficient conditions to get an improved Hardy inequality with weight.

\smallskip

\begin{thm}\label{Thm wiHi}
If $H_1)$--$H_4)$ hold, then we get the estimate
\begin{equation}\label {wiHi}
\begin{split}
\frac{(N+K_2-2)^2}{4}&\int_{\R^N}\frac{\varphi^2}{|x|^2}\,d\mu+
\int_{\R^N}V\,\varphi^2\,d\mu
\\&
\le \int_{\R^N}|\nabla \varphi|^2\,d\mu+
K_1 \int_{\R^N} \varphi^2\,d\mu
\end{split}
\end{equation}
for any functions $\varphi \in H^1_\mu$.
\end{thm}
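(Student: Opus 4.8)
The plan is to run the ground-state (vector field) transformation generated by the very function $f=g\,|x|^{-\alpha}$ singled out before the statement, with the exponent tuned so that the optimal constant appears. First I would fix
$$
\alpha=\frac{N+K_2-2}{2}
$$
(admissible because $K_2>2-N$ forces $N+K_2-2>0$, hence $\alpha>0$), take $\varphi\in C^\infty_c(\R^N)$, and write $\varphi=f\psi$ with $\psi=\varphi/f$. Expanding $|\nabla\varphi|^2=|\psi\nabla f+f\nabla\psi|^2$, integrating against $d\mu$, and integrating the cross term $\int f\,\nabla f\cdot\nabla(\psi^2)\,d\mu$ by parts, the two copies of $\int\psi^2|\nabla f|^2\,d\mu$ cancel and one is left with the sharp identity
\begin{equation*}
\int_{\R^N}|\nabla\varphi|^2\,d\mu
=-\int_{\R^N}\varphi^2\left(\frac{\Delta f}{f}+\frac{\nabla f}{f}\cdot\frac{\nabla\mu}{\mu}\right)d\mu
+\int_{\R^N}f^2|\nabla\psi|^2\,d\mu .
\end{equation*}
The conditions $H_1)$ and $i)$ of $H_2)$ are exactly what guarantee that every integral here is finite and the integration by parts legitimate; discarding the nonnegative remainder $\int f^2|\nabla\psi|^2\,d\mu$ yields the basic lower bound for the Dirichlet energy.

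The next step is the pointwise computation of the logarithmic derivatives of $f$. Since $f=g\,|x|^{-\alpha}$ one gets $\frac{\nabla f}{f}=\frac{\nabla g}{g}-\alpha\frac{x}{|x|^2}$, which is precisely the field occurring in $H_4)$, together with
\begin{equation*}
\frac{\Delta f}{f}=\frac{\Delta g}{g}-2\alpha\,\frac{x}{|x|^2}\cdot\frac{\nabla g}{g}-\frac{\alpha(N-2-\alpha)}{|x|^2}.
\end{equation*}
Substituting $\frac{\Delta g}{g}=(N-2)\frac{x}{|x|^2}\cdot\frac{\nabla g}{g}-W$ from the definition of $W$ in $H_3)$ and using $H_4)$ to bound $-\frac{\nabla f}{f}\cdot\frac{\nabla\mu}{\mu}$ from below, the coefficient of $|x|^{-2}$ becomes $\alpha(N-2-\alpha+K_2)$, which the choice of $\alpha$ turns into $\alpha^2=\frac{(N+K_2-2)^2}{4}$ — this is the whole point of that value of $\alpha$. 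Collecting the remaining terms gives
\begin{equation*}
-\left(\frac{\Delta f}{f}+\frac{\nabla f}{f}\cdot\frac{\nabla\mu}{\mu}\right)
\ge \frac{(N+K_2-2)^2}{4\,|x|^2}
+W+(K_2-K_3)\,\frac{x}{|x|^2}\cdot\frac{\nabla g}{g}-K_1 .
\end{equation*}

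Finally I would eliminate the residual gradient term through the sign convention on $K_3$: in the main case $K_2\neq0$ the prescription $K_3=K_2$ makes the coefficient $K_2-K_3$ vanish identically, while the degenerate case $K_2=0$ (where only $K_3\le0$ is imposed) must be treated separately, using the sign of $K_3$ together with the monotonicity $(g'\rho)'\le0$ encoded in $ii)$ of $H_2)$. What survives is $W$, and $H_3)$ gives $W\ge V\ge0$ pointwise, so the integrand dominates $\frac{(N+K_2-2)^2}{4|x|^2}+V-K_1$. Integrating against $\varphi^2\,d\mu$ produces (\ref{wiHi}) for $\varphi\in C^\infty_c(\R^N)$, and the general case follows from the density of $C^\infty_c(\R^N)$ in $H^1_\mu$ granted by $H_1)$, passing to the limit on the right and invoking Fatou's lemma for the two nonnegative terms on the left.

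I expect the real difficulty to be analytic rather than algebraic: making the integration by parts rigorous in spite of the $|x|^{-1}$ singularity of $\frac{\nabla f}{f}$ at the origin and the very weak regularity of the weight, where only $\sqrt{\mu}\in H^1_{loc}(\R^N)$ and $\mu^{-1}\in L^1_{loc}(\R^N)$ are available. Concretely I would excise a ball $B_\varepsilon$, integrate by parts on $\R^N\setminus B_\varepsilon$, and show the boundary flux over $\partial B_\varepsilon$ tends to $0$; verifying that each integral so produced is finite, so that no term is an indeterminate $\infty-\infty$, is exactly what the local integrability requirements in $H_1)$ and $i)$ of $H_2)$ are there to secure. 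Comparable care is needed in the concluding limit to ensure the singular left-hand integrals are controlled as $\varphi$ ranges over $H^1_\mu$.
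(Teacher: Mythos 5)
Your proposal is correct and follows essentially the same route as the paper: the paper also works with $f=g\,|x|^{-\alpha}$, computes $-\frac{\Delta f}{f}$, and invokes $H_4)$ and then $H_3)$ exactly as you do, except that it phrases the argument through the vector field $F=-\frac{\nabla f}{f}\mu$, estimates $\int \mathrm{div}\,F\,\varphi^2\,dx$ by integration by parts plus Cauchy--Schwarz and Young (a loss which equals precisely your discarded nonnegative remainder $\int f^2|\nabla\psi|^2\,d\mu$), and keeps $\alpha\in(0,N+K_2-2)$ free until a final optimization, whereas you substitute $\varphi=f\psi$ and fix $\alpha=\frac{N+K_2-2}{2}$ from the start. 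Your handling of the delicate points is at the same level as the paper's: it justifies the integration by parts by verifying $F_j,\ \frac{\partial F_j}{\partial x_j}\in L^1_{loc}(\R^N)$ through H\"older, $H_1)$, $i)$ of $H_2)$ and the classical Hardy inequality (rather than by excising a ball and tracking the boundary flux), and it dismisses the leftover term $(K_2-K_3)\frac{x}{|x|^2}\cdot\frac{\nabla g}{g}$ no more explicitly than you do, merely observing that $2\alpha_0-K_3\ge N-2$ and citing $H_3)$.
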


\begin{proof}
By the density result, we prove (\ref{wiHi}) for any
$\varphi \in C_{c}^{\infty}(\R^N)$.  

We introduce the vector-valued function 
$$
F=-\frac{\nabla f}{ f}\mu=-\frac{\nabla g}{ g}\mu+\alpha\frac{ x}{|x|^2}\mu,
$$
where $f=\frac{g}{|x|^\alpha}$, $\alpha\in (0, N+K_2-2)$.

We get
$$
{\rm div}F=
-\frac{\Delta f}{f}\mu+\left|\frac{\nabla f}{f}\right|^2\mu
-\frac{\nabla f}{f}\cdot \nabla\mu,
$$
where
\begin{equation*}
\begin{split}
-\frac{\Delta f}{f}&=-|x|^\alpha
\Delta \frac{1}{|x|^\alpha}- 2|x|^\alpha\nabla \frac{1}{|x|^\alpha}\cdot
\frac{\nabla g}{ g}-\frac{\Delta g}{g}
\\&=
\frac{\alpha(N-2-\alpha)}{|x|^2}
+2\alpha\frac{x}{|x|^2}\cdot \frac{\nabla g}{g}-\frac{\Delta g}{g}.
\end{split}
\end{equation*}
Now we observe that $F_j$, $\displaystyle\frac{\partial F_j}{\partial x_j}\in L_{loc}^1(\R^N)$, 
where $F_j$ is the j-th component of $F$. 
Indeed, for any $K$ compact set in $\R^N$, by the H\"older and the classical Hardy inequalities, 
using hypotheses $i)$ in $H_1)$ on $\mu$ and $i)$ in $H_2)$ on $g$, 
we obtain the following estimate

\begin{equation*}
\begin{split}
\int_K |F_j| \,dx&\le
\int_K \left|-\frac{1}{g}\frac{\partial g}{\partial x_j}\right|\mu(x)\,dx
+\alpha\int_K \frac{\mu(x)}{|x|} \,dx
\\&\le
\int_K \left|-\frac{1}{g}\frac{\partial g}{\partial x_j}\right|\mu(x)\,dx
+\alpha\left(\int_K \frac{\mu(x)}{|x|^2}\,dx \right)^{\frac{1}{2}}
\left(\int_K \mu(x)\,dx\right)^{\frac{1}{2}}
\\&\le
\left(\int_K  \left|-\frac{1}{g}\frac{\partial g}{\partial x_j}\right|^2\mu(x)\,dx \right)^{\frac{1}{2}}
\left(\int_K  \mu(x) \,dx\right)^{\frac{1}{2}}
\\&+
\frac{2\alpha} {(N-2)}\left(\int_K \left|\nabla\sqrt \mu\right|^2\,dx \right)^{\frac{1}{2}}
\left(\int_K  \mu(x) \,dx\right)^{\frac{1}{2}}.
\end{split}
\end{equation*}
To obtain the local integrability of the partial derivative of $F_j$ 
\begin{equation}\label{partial derivative}
\begin{split}
\frac{\partial F_j}{\partial x_j}& =\frac{\partial}{\partial x_j}\left(
-\frac{1}{g}\frac{\partial g}{\partial x_j}\mu+\alpha \frac{x_j}{|x|^2}\mu\right)=
\frac{1}{g^2}\left(\frac{\partial g}{\partial x_j}\right)^2\mu
-\frac{1}{g}\frac{\partial^2 g}{\partial x_j^2}\mu
\\&-
\frac{1}{g}\frac{\partial g}{\partial x_j}\frac{\partial \mu}{\partial x_j}+
\alpha\frac{\mu}{|x|^2}-2\alpha \frac{x_j^2}{|x|^4}\mu+
\alpha \frac{x_j}{|x|^2}\frac{\partial \mu}{\partial x_j}
\\&=
d_1+d_2+d_3+d_4+d_5+d_6,
\end{split}
\end{equation}
we estimate the terms on on the right-hand side in (\ref{partial derivative}).
The terms $d_1$, $d_2$ belong to $L_{loc}^1(\mathbb{R}^N)$ by hypotheses, 
\begin{equation*}
\begin{split}
\int_K |d_3|\,dx&\le \left(\int_K \left|-\frac{1}{g}\frac{\partial g}{\partial x_j}\right|^2\mu(x)\,dx
\right)^{\frac{1}{2}}
 \left(\int_K \left|\frac{1}{{\sqrt \mu}}\frac{\partial \mu}{\partial x_j}\right|^2\,dx
\right)^{\frac{1}{2}}
\\&\le
4\left(\int_K \left|
-\frac{1}{g}\frac{\partial g}{\partial x_j}\right|^2\,d\mu
\right)^{\frac{1}{2}}
\left(\int_K \left|{\nabla{\sqrt \mu}}\right|^2\,dx\right)^{\frac{1}{2}},
\end{split}
\end{equation*}
$d_4$, $d_5$ can be estimated using the Hardy inequality and the hypothesis $i)$ in $H_1)$
as above. As regards the remaing term we have 
\begin{equation*}
\begin{split}
\int_K |d_6|\,dx&\le
\alpha \int_K \frac{\sqrt \mu}{|x|}\frac{1}{\sqrt \mu}\left|\frac{\partial \mu}{\partial x_j}\right|\,dx
\\&\le
2\alpha\left(\int_K \frac{\mu}{|x|^2}\,dx\right)^{\frac{1}{2}}
\left(\int_K \left|{\nabla{\sqrt \mu}}\right|^2\,dx\right)^{\frac{1}{2}}.
\\&\le
\frac{4\alpha} {(N-2)}\left(\int_K \left|\nabla\sqrt \mu\right|^2\,dx \right).
\end{split}
\end{equation*}


\noindent Now we start from the following integral

\begin{equation}\label{div F}
\begin{split}
\int_{\R^N}{\rm div}F \,\varphi^2 dx&=
\int_{\R^N}\Biggl[\frac{\alpha(N-2-\alpha)}{|x|^2}+
2\alpha\frac{x}{|x|^2}\cdot \frac{\nabla g}{g}
\\&
-\frac{\Delta g}{g}\Biggr]\varphi^2\,d\mu
+\int_{\R^N}\left|\frac{\nabla g}{ g}-\alpha\frac{x}{|x|^2}\right|^2\varphi^2\,d\mu
\\&-
\int_{\R^N}\left(\frac{\nabla g}{g}-\alpha\frac{x}{|x|^2}\right)\cdot
\frac{\nabla\mu}{\mu}\Biggr]\varphi^2\,d\mu.
\end{split}
\end{equation}
The first step is to estimate the integral on the left-hand side in (\ref{div F}) from above. 
To this aim we integrate by parts and use H\"older's and Young's inequalities to get
\begin{equation}\label{div F right w}
\begin{split}
\int_{\R^N}&{\rm div}F \,\varphi^2 \,d\mu=
-2\int_{\R^N}\varphi F\cdot\nabla\varphi \, d\mu
\\&
\le 2\left(\int_{\R^N}|\nabla \varphi|^2 \, d\mu\right)^{\frac{1}{2}}
\left(\int_{\R^N}\left|\frac{\nabla f}{f}\right|^2\,\varphi^2\, d\mu\right)^{\frac{1}{2}}
\\&
\le \int_{\R^N}|\nabla \varphi|^2 \, d\mu+
\int_{\R^N}\left|\frac{\nabla f}{f}\right|^2 \,\varphi^2\, d\mu
\\&
=\int_{\R^N}|\nabla \varphi|^2 \, d\mu+
\int_{\R^N}\left|\frac{\nabla g}{ g}-\alpha\frac{x}{|x|^2}\right|^2\,\varphi^2\, d\mu.
\end{split}
\end{equation}

On the other hand, starting from (\ref{div F}), by the condition $H_4)$ we obtain  

\begin{equation}\label{div F left w}
\begin{split}
\int_{\R^N}{\rm div}F \,\varphi^2 \,d\mu&\ge
\alpha(N-2-\alpha)\int_{\R^N}\frac{\varphi^2}{|x|^2}\,d\mu
\\&+
\int_{\R^N}\left(2\alpha\frac{x}{|x|^2}\cdot \frac{\nabla g}{g}-\frac{\Delta g}{g}\right)\varphi^2\,d\mu
\\&
+\int_{\R^N}\left|\frac{\nabla g}{ g}-\alpha\frac{x}{|x|^2}\right|^2\varphi^2\,d\mu-
K_1\int_{\R^N}\varphi^2\,d\mu
\\&
+\alpha K_2\int_{\R^N}\frac{\varphi^2}{|x|^2}\,d\mu
- K_3\int_{\R^N} \frac{x}{|x|^2}\cdot \frac{\nabla g}{g}
\varphi^2\,d\mu
\\&=
\alpha(N+K_2-2-\alpha)\int_{\R^N}\frac{\varphi^2}{|x|^2}\,d\mu
\\&
+\int_{\R^N}\left[(2\alpha-K_3)\frac{x}{|x|^2}\cdot \frac{\nabla g}{g}
-\frac{\Delta g}{g}\right]\varphi^2\,d\mu
\\&+
\int_{\R^N}\left|\frac{\nabla g}{ g}-\alpha\frac{x}{|x|^2}\right|^2\varphi^2\,d\mu-
K_1\int_{\R^N}\varphi^2\,d\mu.
\end{split}
\end{equation}
The inequalities (\ref{div F right w}) and (\ref{div F left w}) led us to the estimate 
\begin{equation}\label{eq. with alfa}
\begin{split}
\alpha(N+&K_2-2-\alpha)\int_{\R^N}\frac{\varphi^2}{|x|^2}\,d\mu+
\int_{\R^N} \Biggl[(2\alpha-K_3)\frac{x}{|x|^2}\cdot \frac{\nabla g}{g}
\\&
-\frac{\Delta g}{g}\Biggr]\varphi^2\,d\mu\le 
\int_{\R^N}|\nabla \varphi|^2 \, d\mu+K_1\int_{\R^N}\varphi^2\,d\mu.
\end{split}
\end{equation}
The maximum value of the first constant on the left-hand side in (\ref{eq. with alfa}) is
$$
\max_\alpha \alpha(N+k_2-2-\alpha)=\frac{(N+k_2-2)^2}{4 },
$$
attained for $\alpha=\alpha_o=\frac{(N+k_2-2)}{2 }$.

Observing that $2\alpha_o-K_3\ge N-2$ and taking in mind the condition $H_3)$,
we obtain the inequality  (\ref{wiHi}).

\end{proof}

\smallskip

\begin{rmk}

For $g=1$ and, then, $V=W=0$, we obtain a weighted Hardy inequality. 
For $g=1$, $\mu=1$ and, so, 
when $K_2=0$, the method to get the result in the Theorem \ref{Thm wiHi}
results to be the vector field method used in \cite{M} to prove the Hardy inequality.

\end{rmk}

\smallskip

An example of weight satisfying condition $H_1)$ is the function  \break
$\mu=\frac{1}{|x|^\gamma}$,
for $\gamma<N-2$. In this case the condition (\ref{radial condition})
 is verified for $K_2,K_3\le-\gamma$
for any $K_1 \ge 0$. Then, as a consequence of Theorem \ref{Thm wiHi}, 
we get the inequality 
\begin{equation}\label{log w Rn}
\begin{split}
\frac{(N-\gamma-2)^2}{4}&\int_{\R^N}\frac{\varphi^2}{|x|^2}|x|^{-\gamma}\,dx+
\int_{\R^N}V\,\varphi^2|x|^{-\gamma}\,dx
\\&
\le \int_{\R^N}|\nabla \varphi|^2|x|^{-\gamma}\,dx
\end{split}
\end{equation}
for any functions $\varphi \in H^1_\mu$. For $V=W=0$ the inequality above is the
Caffarelli- Niremberg inequality.

We remark that,
as a consequence of Theorem \ref{Thm wiHi}, we deduce the estimate 
$$
\|{\tilde V}_\mu^{\frac{1}{2}}\varphi\|_{L_\mu^2(\R^N)}\le c \|\varphi\|_{H^1_\mu(\R^N)},
$$
where ${\tilde V} =\frac{(N-2)^2}{4}\frac{1}{|x|^2}+V$ and 
$c$ is a constant independent of $V$ and $\varphi$.

For $L^p$ estimates and embedding results of this type 
with some applications to elliptic equations see
\cite{CA JMAA, CA JIM 2007, CA JIM 2008, CA MIA, CA embedding, CTar}.

Finally, as a direct consequence of the Theorem \ref{Thm wiHi}, we deduce 
the following result concerning a class of general weighted Hardy inequalities
for $V$ satisfying $H_3)$

\begin{equation*}
\int_{\R^N}V\,\varphi^2\,d\mu\le
\int_{\R^N}|\nabla \varphi|^2\,d\mu+
K_1 \int_{\R^N} \varphi^2\,d\mu
\end{equation*}
for any functions $\varphi \in H^1_\mu$.

\bigskip\bigskip

\section{Local estimates}

\bigskip

In this Section we state some local weighted estimates by means of
the metod used to prove Theorem \ref{Thm wiHi}. 
These estimates represent the weighted version of well-known improved Hardy inequalities in bounded subset of $\R^N$ (see \cite{Adimur, Filippas Ter, Gouss, Musina})
and are based on examples of functions $g$ satisfying locally 
the assumptions of the Theorem \ref{Thm wiHi}. 

The first result is the following.

\smallskip 

\begin{thm}\label{Thm wiHi log}
Let $N\ge 3$ and let $B_1$ the unit ball in $\R^N$. Then, under assumptions $H_1)$, $i)$ 
in $H_2)$ and $H_4)$ on $\mu$, we get
\begin{equation}\label{log w}
\begin{split}
\frac{(N+K_2-2)^2}{4}\int_{B_1}&\frac{\varphi^2}{|x|^2}\,d\mu+\frac{1}{4}\int_{B_1}
\frac{\varphi^2}{|x|^2|\log |x||^2}\,d\mu
\\&\le 
\int_{B_1}|\nabla \varphi|^2\,d\mu+K_1\int_{B_1} \varphi^2 \,d\mu
\end{split}
\end{equation}
for any functions $\varphi \in C^\infty_c(B_1)$.
\end{thm}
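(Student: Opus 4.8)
The plan is to deduce the statement directly from Theorem~\ref{Thm wiHi} by making a single well-chosen radial choice of the function $g$. The guiding idea is that the correction potential produced by the vector field method is $W=-\frac{g''}{g}-\frac{g'}{\rho g}$, so I must select $g$ on $B_1$ so that $W$ equals the logarithmic weight $\frac{1}{4|x|^2|\log |x||^2}$ of the announced correction term. First I would take $g(\rho)=|\log\rho|^{1/2}=(-\log\rho)^{1/2}$, which is positive on $(0,1)$, and then set $V=W$, so that $H_3)$ holds with equality as soon as $W\ge 0$ is confirmed. This $g$ belongs to the family $g=|\log\rho|^\beta$, $\beta\in(0,1)$, already singled out in Section~2 as satisfying (\ref{g' r}), and $\beta=\frac12$ is precisely the value that maximises the resulting constant.

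Next I would carry out the short computation certifying the choice. Writing $L=-\log\rho>0$ one gets $g'=-\frac{1}{2\rho}L^{-1/2}$, hence $g'\rho=-\frac12 L^{-1/2}$ and $(g'\rho)'=-\frac{1}{4\rho}L^{-3/2}\le 0$, so $g$ satisfies (\ref{g' r}), i.e. $ii)$ in $H_2)$. Forming $W$ from $g'$ and $g''$, the two contributions of order $L^{-1}$ cancel and one is left with $W=\frac14\rho^{-2}L^{-2}=\frac{1}{4|x|^2|\log |x||^2}>0$; this simultaneously identifies the correction term and verifies $H_3)$ for $V=W$. The remaining structural assumptions $H_1)$, $i)$ in $H_2)$ and $H_4)$ are exactly the hypotheses imposed on $\mu$ in the statement.

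With $g$ so fixed, I would then repeat the vector field argument of Theorem~\ref{Thm wiHi} word for word, integrating over $B_1$ and taking $\varphi\in C_c^\infty(B_1)$. The compact support of $\varphi$ inside the open ball is what legitimises the computation in spite of the boundary singularity of $g$ at $\rho=1$ (where $g\to0$ and $g'/g\to-\infty$): every integral is effectively over a compact $K\subset B_1$, on which $H_1)$ and $i)$ in $H_2)$ give $F_j,\frac{\partial F_j}{\partial x_j}\in L^1_{loc}$ and justify the integration by parts without boundary terms. Optimising in $\alpha$ as before selects $\alpha_o=\frac{N+K_2-2}{2}$ and the leading constant $\frac{(N+K_2-2)^2}{4}$, while the cross term $(K_2-K_3)\frac{g'}{\rho g}$ appearing in (\ref{eq. with alfa}) drops out because $H_4)$ yields $K_3=K_2$, exactly as in the final step of the proof of Theorem~\ref{Thm wiHi}; the surviving part of the bracket is then $\int_{B_1}W\varphi^2\,d\mu=\frac14\int_{B_1}\frac{\varphi^2}{|x|^2|\log |x||^2}\,d\mu$, which is (\ref{log w}).

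The hard part will be the second step, namely checking that this explicitly singular $g$ really fits the framework of Theorem~\ref{Thm wiHi}. The two delicate issues are the exact cancellation producing the clean constant $\frac14$ (equivalently, that $\beta(1-\beta)$ is maximised at $\beta=\frac12$ within the family $g=|\log\rho|^\beta$), and the local integrability near $\rho=0$ of the weights $L^{-1}$ and $L^{-2}$ coming from $\nabla g/g$ when tested against $\mu$ under $H_1)$ and $i)$ in $H_2)$. The opposite endpoint $\rho=1$ requires no separate care precisely because the test functions are compactly supported in $B_1$.
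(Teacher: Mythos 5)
Your proposal is correct and follows essentially the same route as the paper: the paper likewise takes $g=|\log|x||^{\beta}$, computes $W=\frac{\beta(1-\beta)}{|x|^2|\log|x||^2}$, verifies the local integrability in $i)$ of $H_2)$ on compact subsets of $B_1$, and reruns the vector field argument of Theorem~\ref{Thm wiHi} localized to $B_1$. The only cosmetic difference is that the paper keeps $\beta\in(0,1)$ general and maximizes $\beta(1-\beta)$ at the end (obtaining $\frac14$ at $\beta=\frac12$), whereas you fix $\beta=\frac12$ from the outset after identifying it as the maximizer.
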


\begin{proof}
Reasoning as in the proof of Theorem \ref{Thm wiHi}, we set 
$g=|\log|x||^\beta$, $\beta\in (0,1)$. 
Then the function $W$ in $H_3)$ is given by
$$
W=\frac{\beta(1-\beta)}{|x|^2|\log |x||^2}.
$$
To get the integrability required in $i)$ in $H_2)$, 
it is sufficient that the weights $\mu$ are such that $\displaystyle\frac{g'^2}{g^2} \mu$,
$\displaystyle\frac{g''}{g} \mu\in L^1_{loc}(\R^N)$.
More precisely, pointing out that
\begin{equation}\label {derivative g}
-\frac{\partial g}{\partial x_j}=-\frac{x_j}{|x|}g',
\qquad
\frac{\partial ^2 g}{\partial x_j^2}=\frac{g'}{|x|}-\frac{x_j^2}{|x|^2}g'+\frac{x_j^2}{|x|^2}g'',
\end{equation}
if $K$ is a compact set in $B_1$, we get

\begin{equation*}
\int_K \left|-\frac{1}{g}\frac{\partial g}{\partial x_j}\right|\,d\mu\le
\left(\int_K \left|-\frac{1}{g}\frac{\partial g}{\partial x_j}\right|^2\,d\mu
\right)^{\frac{1}{2}}
\left(\int_K \mu(x)\,d\mu \right)^{\frac{1}{2}},
\end{equation*}

\begin{equation*}
\int_K \left|-\frac{1}{g}\frac{\partial g}{\partial x_j}\right|^2\,d\mu\le
\int_K \left|\frac{g'}{g}\right|^2 d\mu=
\int_K \frac{\beta^2}{|x|^{2}|\log |x| |^2}\,d\mu,
\end{equation*}

\begin{equation*}
\begin{split}
\int_K &\frac{1}{|x|}\left|\frac{g'}{g}\right|\,d\mu\le
\left(\int_K  \frac{\mu}{|x|^2}\,dx\right)^{\frac{1}{2}}
\left(\int_K \left|\frac{g'}{g}\right|^2\,d\mu\right)^{\frac{1}{2}}
\\&\le
\frac{2}{N-2}\left(\int_K  |\nabla{\sqrt\mu}|^2\,d\mu\right)^{\frac{1}{2}}
\left(\int_K \frac{\beta^2}{|x|^2|\log |x| |^2}\,d\mu\right)^{\frac{1}{2}}
\end{split}
\end{equation*}
and, about the last term on the right-hand side in (\ref {derivative g}),

\begin{equation*}
\int_K \frac{x_j^2}{|x|^2}\left|\frac {g''}{g}\right|\,d\mu\le
\int_K \left|\frac{g''}{g}\right|\,d\mu=
\int_K \left|\frac{\beta(\beta-1) +\beta|\log |x||}{|x|^2|\log |x| |^2}\right|\,d\mu.
\end{equation*}
Finally, since
$$
\max_{\beta\in(0,1)}[\beta(1-\beta)]=\frac{1}{4},
$$
attained for $\beta=\frac{1}{2}$, we get the result.

\end{proof}

In the case of weight $\mu=\frac{1}{|x|^\gamma}$, $\gamma<N-2$, the inequality 
(\ref{log w}), $K_1=0$ and $K_2=-\gamma$, is the local version of (\ref{log w Rn}) 
with $V=\frac{1}{4}\frac{1}{|x|^2|\log |x||^2}$.

Another example of weight is given by $\mu=\frac{1}{|x|^\gamma} e^{-\delta|x|^m}$,
 $\gamma<N-2$, $\delta, m>0$. In the last case the condition (\ref{radial condition}) in $B_1$ 
is satisfied for $K_2,K_3\le -\gamma-\delta m$, $K_1 \ge 0$.

For $\mu=1$ the inequality (\ref{log w}) results to be the improved Hardy inequality with 
Lebesgue measure in \cite {Adimur, Filippas Ter, Musina}.

 A further local inequality follows.

\smallskip

\begin{thm}\label{Thm wiHi 1-r beta}
Let $N\ge 3$ and let $B_1$ the unit ball in $\R^N$. Then, under assumptions $H_1)$, $i)$ 
in $H_2)$ and $H_4)$ on $\mu$, we get
\begin{equation}\label{1- r beta w}
\begin{split}
\frac{(N+K_2-2)^2}{4}\int_{B_1}&\frac{\varphi^2}{|x|^2}\,d\mu+\beta^2\int_{B_1}
\frac{\varphi^2}{|x|^{2-\beta}}\,d\mu
\\&\le 
\int_{B_1}|\nabla \varphi|^2\,d\mu+K_1\int_{B_1} \varphi^2 \,d\mu
\end{split}
\end{equation}
for any functions $\varphi \in C^\infty_c(B_1)$ and $\beta\in(0, 2]$.
\end{thm}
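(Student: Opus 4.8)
The plan is to reproduce the argument of Theorem~\ref{Thm wiHi} on the ball $B_1$, specializing the radial profile to $g=g(\rho)=1-\rho^\beta$ with $\rho=|x|$ and $\beta\in(0,2]$. For $\varphi\in C^\infty_c(B_1)$ the support is a compact subset of $\{|x|\le 1-\delta\}$ for some $\delta>0$, on which $g=1-\rho^\beta\ge 1-(1-\delta)^\beta>0$ stays bounded away from zero; hence $1/g$ is bounded on $\supp\varphi$ and the only singularity to monitor is at the origin, while no boundary terms arise in the integration by parts since $\varphi$ vanishes near $\partial B_1$.

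First I would compute the potential $W$ of $H_3)$ for this profile. From $g'=-\beta\rho^{\beta-1}$ and $g''=-\beta(\beta-1)\rho^{\beta-2}$ one obtains
\[
W=-\frac{g''}{g}-\frac{g'}{\rho g}
=\frac{\beta(\beta-1)\rho^{\beta-2}+\beta\rho^{\beta-2}}{1-\rho^\beta}
=\frac{\beta^2\,\rho^{\beta-2}}{1-\rho^\beta}.
\]
Since $0<1-\rho^\beta\le 1$ throughout $B_1$, this gives the pointwise bound
\[
W=\frac{\beta^2}{|x|^{2-\beta}}\cdot\frac{1}{1-\rho^\beta}\ge\frac{\beta^2}{|x|^{2-\beta}}\ge 0,
\]
so the choice $V=\beta^2|x|^{\beta-2}$ satisfies $0\le V\le W$, that is $H_3)$. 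I would also record that $g$ belongs to the admissible class, since $(g'\rho)'=-\beta^2\rho^{\beta-1}\le 0$.

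Next I would verify $i)$ in $H_2)$ along the lines of the proof of Theorem~\ref{Thm wiHi log}. Writing $-\partial_{x_j}g=-(x_j/|x|)\,g'$ and expanding $\partial^2_{x_j}g$, the $L^2_{loc}$-membership of $-g^{-1}(\partial_{x_j}g)\sqrt{\mu}$ and the $L^1_{loc}$-membership of $g^{-1}(\partial^2_{x_j}g)\mu$ reduce, through H\"older and the classical Hardy inequality, to $\frac{g'^2}{g^2}\mu,\ \frac{g''}{g}\mu\in L^1_{loc}$, which is the content of $i)$ in $H_2)$ for this profile. Near the origin these behave like $\beta^2\rho^{2\beta-2}\mu$ and $\beta(1-\beta)\rho^{\beta-2}\mu$; for $N\ge 3$ and $\beta\in(0,2]$ the exponents $2\beta-2$ and $\beta-2$ exceed $-N$, so the conditions hold for the model weights $\mu=|x|^{-\gamma}$ ($\gamma<N-2$) and $\mu=|x|^{-\gamma}e^{-\delta|x|^m}$.

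With $g$, $V$, and the local integrability in hand, the inequalities (\ref{div F right w}) and (\ref{div F left w}), the use of $H_4)$, and the optimization $\max_\alpha\alpha(N+K_2-2-\alpha)=(N+K_2-2)^2/4$ at $\alpha_o=(N+K_2-2)/2$ transfer verbatim, producing (\ref{eq. with alfa}) on $B_1$; bounding the bracket below by $V$ through $2\alpha_o-K_3\ge N-2$ and $H_3)$ then delivers (\ref{1- r beta w}). The step needing the most care is the integrability near the origin in the range $\beta\in(0,1)$, where $g'/g\sim-\beta\rho^{\beta-1}$ is singular; this is exactly where the classical Hardy inequality and the hypotheses on $\sqrt{\mu}$ are used, as in the logarithmic case. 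For $\beta=2$ one has $W=4/(1-\rho^2)\ge 4$, so the correction term becomes $4\int_{B_1}\varphi^2\,d\mu$, the announced weighted analogue of (\ref{BV}).
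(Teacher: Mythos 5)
Your proposal is correct and follows essentially the same route as the paper, whose proof consists precisely of choosing $g=1-|x|^\beta$ and observing that $V=\beta^2|x|^{\beta-2}\le W=\beta^2|x|^{\beta-2}(1-|x|^\beta)^{-1}$, with the machinery of Theorem \ref{Thm wiHi} (run locally, as in Theorem \ref{Thm wiHi log}) doing the rest. Your additional checks --- the sign condition $(g'\rho)'\le 0$, the local integrability near the origin, and the remark that $\supp\varphi\subset\subset B_1$ keeps $1/g$ bounded so the degeneracy of $g$ at $\partial B_1$ causes no harm --- are details the paper leaves implicit, and they are verified correctly.
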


\begin{proof}

It is enough to consider $g=1-|x|^\beta$, $\beta\in(0, 2]$, observing that
$$
V=\frac{\beta^2}{|x|^{2-\beta}}\le
W=\frac{\beta^2}{|x|^{2-\beta}(1-|x|^\beta)}.
$$

\end{proof}
Finally, we remark that for $\beta=2$ and $\mu=1$ we get almost the estimate
 in \cite{BV, Gouss} in the sense that, in place of 4 in the left-hand side in (\ref{1- r beta w}), 
the authors obtained $z_0^2$, where $z_0$ is the first zero of the Bessel function $J_0(z)$.

Also in this case the functions $\mu=\frac{1}{|x|^\gamma}$ and $\mu=
\frac{1}{|x|^\gamma} e^{-\delta|x|^m}$ are good weights.

\bigskip\bigskip

\section{An application to evolution problems}

\bigskip

In the Section we give a motivation for our interest in Hardy inequalities with weight.
These estimates play a crucial role in achieving existence results for solutions to the problem
$$
(P)\quad \left\{\begin{array}{ll}
\partial_tu(x,t)=Lu(x,t)+{\tilde V(x)}u(x,t),\quad \,x\in {\mathbb R}^N, t>0,\\
u(\cdot ,0)=u_0\geq 0\in L_\mu^2,
\end{array}
\right. $$
where $L$ is the Kolmogorov operator 

\begin{equation*}
Lu=\Delta u+\frac{\nabla \mu}{\mu}\cdot\nabla u
\end{equation*}
defined on smooth functions, perturbed by a potential ${\tilde V(x)}$,
with ${\tilde V(x)}$ sum of an inverse square potential 
and $V$ satisfying condition $H_3)$.

We say that $u$ is a weak solution to ($P$) if, for each $T, R>0 $, we have
$$u\in C(\left[ 0, T \right] , L^2_\mu ), \quad Vu\in L^1(B_R \times \left( 0,T\right) , d\mu dt )$$
and
\begin{equation*}
\int_0^T \int_{\R^N}u(-\partial_t\phi - L\phi )\,d\mu dt -\int_{\R^N}u_0\phi(\cdot ,0)\,d\mu =
\int_0^T \int_{\R^N} Vu\phi \, d\mu dt
\end{equation*}
for all $\phi \in W_2^{2,1}(\R^N \times \left[ 0,T\right])$ having compact support with $\phi(\cdot , T)=0$, 
where $B_R$ denotes the open ball of $\R^N$ of radius $R$ centered at $0$.
For any $\Omega\subset \R^N$, $ W_2^{2,1}(\Omega\times (0,T)) $ is the parabolic Sobolev space 
of the functions $u\in L^2(\Omega \times (0,T)) $ having weak space derivatives $D_x^{\alpha}
u\in L^2(\Omega \times (0,T))$ for $|\alpha |\le 2$ and weak time derivative
$\partial_t u \in L^2(\Omega \times (0,T))$ equipped with the norm 
\begin{equation*}
\begin{split}
\|u\|_{W_2^{2,1}(\Omega\times (0,T))}&:= \Biggl( 
\|u\|_{L^2(\Omega \times (0,T))}^2 + \|\partial_t u\|_{L^2(\Omega \times (0,T))}^2 
\\&+
 \sum_{1\le |\alpha |\le 2} \|D^{\alpha}u\|_{L^2(\Omega \times (0,T))}^2
\Biggr)^{\frac{1}{2}}.
\end{split}
\end{equation*}
An additional assumptions on $\mu$ allows us to get semigroup generation 
on $L^2_\mu$ (see \cite[Corollary 3.7]{alb-lor-man}).

\begin{itemize}
\item [$H_5)$] $\quad\mu \in C_{loc}^{1,\lambda}(\R^N\setminus \{0\})$, $\lambda\in(0,1)$, 
$\mu\in H^{1}_{loc}(\R^N)$,  $\frac{\nabla \mu}{\mu}\in L^r_{loc}(\R^N)$ 
for some $r>N$, and $\inf_{x\in K}\mu (x)>0$ for any compact set $K\subset \R^N$
\end{itemize}
We remark that the condition $H_5)$ implies
 $i)$ in $H_3)$. Indeed if $\mu\in H^{1}_{loc}(\R^N)$
then $\mu\in L^{1}_{loc}(\R^N)$ and $\nabla\mu\in L^{2}_{loc}(\R^N)$. Moreover
$\frac{\nabla \mu}{\mu}\in L^2_{loc}(\R^N)$ since $r>2$. So we get
$$
\int_K  \left|\nabla\sqrt\mu\right|^2\,dx=\frac{1}{4}\int_K \frac{|\nabla\mu|^2}{\mu}\,dx\le
\frac{1}{4}\left(\int_K \left|\frac{\nabla\mu}{\mu}\right|^2\,dx\right)^{\frac{1}{2}}
\left(\int_K \left|\nabla\mu\right|^2\,dx\right)^{\frac{1}{2}}.
$$
An example of weight function satisfying $H_5)$ is $\mu=e^{-\delta |x|^m}$, $\delta, m>0$.

In the applications to evolution problems with Kolmogorov operators we need $C_0$-semigroup generation results reasoning as in \cite{GGR, CGRT, CPT2}.
Operators of a more general type for which the generation of semigroup was stated can be found, for example, in \cite{CMR} in the context of weighted spaces.

The bottom of the spectrum of $-(L+{\tilde V})$ is defined as follows
\begin{equation*}
\lambda_1(L+{\tilde V}):=\inf_{\varphi \in H^1_\mu\setminus \{0\}}
\left(\frac{\int_{{\mathbb R}^N}|\nabla \varphi |^2\,d\mu
-\int_{{\mathbb R}^N}{\tilde V}\varphi^2\,d\mu}{\int_{{\mathbb R}^N}\varphi^2\,d\mu}
\right).
\end{equation*}
 The authors in \cite{CGRT} stated the following result with a proof similar to the one given in 
\cite{CabreMartel}.
We include the hypothesis $ii)$ in $H_1)$ to get the density result.

\smallskip

\begin{thm}\label{theor as CM}

Assume that the $\mu$ satisfies $ii)$ in $H_2)$ and $H_5)$.
Let $0\le  {\tilde V}(x)\in L^1_{loc}(\R^N)$. 
Then, if $\lambda_1(L+{\tilde V})>-\infty$, there exists a
positive weak solution $u\in C([0,\infty),L^2_\mu)$ of $(P)$ satisfying
the estimate
\begin{equation}\label{eq 1}
\|u(t)\|_{L^2_\mu}\le Me^{\omega t}\|u_0\|_{L^2_\mu},\quad t\ge0
\end{equation}
for some constants $M\ge 1$ and $\omega \in {\mathbb R}$. 
 \end{thm}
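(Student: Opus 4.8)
The plan is to adapt the Cabr\'e--Martel approximation scheme of \cite{CabreMartel}, in the weighted form developed in \cite{CGRT}, replacing the singular potential $\tilde V$ by a sequence of bounded potentials for which semigroup theory applies directly. First I would truncate, setting $V_n=\min\{\tilde V,n\}$, so that each $V_n$ is bounded, nonnegative, and $V_n\uparrow\tilde V$ pointwise. Under $H_5)$ the Kolmogorov operator $L$ generates a $C_0$-semigroup on $L^2_\mu$ by \cite[Corollary 3.7]{alb-lor-man}, and since $V_n\in L^\infty$ is a bounded perturbation, $L+V_n$ generates a $C_0$-semigroup $T_n(t)$ as well. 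Because the semigroup generated by the symmetric operator $L$ is positivity preserving, so is each $T_n(t)$, whence the solution $u_n(t)=T_n(t)u_0$ of the truncated problem $(P_n)$ is nonnegative for $u_0\ge0$. Monotonicity of the semigroups in the potential, together with $V_n\le V_{n+1}$, gives the increasing chain $0\le u_n\le u_{n+1}$, so that $u_n\uparrow u$ pointwise to a measurable limit $u$.

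Next I would derive a bound on $u_n$ uniform in $n$. The variational characterization of the bottom of the spectrum yields $\|T_n(t)\|_{L^2_\mu\to L^2_\mu}\le e^{-\lambda_1(L+V_n)t}$; since $V_n\le\tilde V$ forces $\lambda_1(L+V_n)\ge\lambda_1(L+\tilde V)$, and the latter is finite by assumption, we obtain $\|u_n(t)\|_{L^2_\mu}\le e^{-\lambda_1(L+\tilde V)t}\|u_0\|_{L^2_\mu}$, that is, estimate (\ref{eq 1}) with $M=1$ and $\omega=-\lambda_1(L+\tilde V)$, uniformly in $n$. By monotone convergence the limit $u(t)$ then lies in $L^2_\mu$ and satisfies the same bound, $u_n(t)\to u(t)$ in $L^2_\mu$, and the continuity $u\in C([0,\infty),L^2_\mu)$ follows as in \cite{CGRT}.

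Finally I would pass to the limit in the weak formulation. Each $u_n$ satisfies the defining integral identity against any test function $\phi\in W_2^{2,1}(\R^N\times[0,T])$ of compact support with $\phi(\cdot,T)=0$; the terms free of the potential converge by the $L^2_\mu$ convergence of $u_n$, while the potential term is handled by the monotone convergence theorem, using $0\le V_nu_n\phi\uparrow\tilde Vu\phi$ on $\supp\phi$. The main obstacle, and the place where the Hardy inequality is essential, is to guarantee the required local integrability $\tilde Vu\in L^1(B_R\times(0,T),d\mu\,dt)$ and the finiteness of $\int_0^T\!\int_{\R^N}\tilde Vu\,\phi\,d\mu\,dt$: the finiteness of $\lambda_1(L+\tilde V)$ is precisely the statement that $\tilde V$ is form-bounded by $-L$, which controls $\int_{\R^N}\tilde Vu_n^2\,d\mu$ uniformly in $n$ and, after a Cauchy--Schwarz step, the linear term as well. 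The density of $C_c^\infty(\R^N)$ in $H^1_\mu$, ensured by $ii)$ in $H_1)$, legitimates the integrations by parts and the extension of the form identities to all admissible test functions.
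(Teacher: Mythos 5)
Your overall scheme --- truncation $V_n=\min\{\tilde V,n\}$, positivity and monotonicity of the truncated semigroups, the uniform bound $\|u_n(t)\|_{L^2_\mu}\le e^{-\lambda_1(L+\tilde V)t}\|u_0\|_{L^2_\mu}$ from the variational characterization, and passage to the limit --- is exactly the Cabr\'e--Martel route that the paper relies on (the paper does not reprove this theorem; it quotes it from \cite{CGRT}, whose proof follows \cite{CabreMartel}). Up to and including the uniform $L^2_\mu$ estimate and the construction of the monotone limit $u$, your sketch is correct.

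The genuine gap is in your last step, where you obtain $\tilde V u\in L^1(B_R\times(0,T),d\mu\,dt)$ and the convergence of the potential term from the claim that finiteness of $\lambda_1(L+\tilde V)$ ``controls $\int_{\R^N}\tilde V u_n^2\,d\mu$ uniformly in $n$,'' followed by Cauchy--Schwarz. Finiteness of $\lambda_1$ is form-boundedness with relative bound exactly $1$: it gives $\int V_n\varphi^2\,d\mu\le\int|\nabla\varphi|^2\,d\mu-\lambda_1\int\varphi^2\,d\mu$, so to bound $\int V_nu_n^2\,d\mu$ uniformly you would need a uniform-in-$n$ bound on $\int|\nabla u_n|^2\,d\mu$, which is not available: the energy identity only controls the \emph{difference}
\begin{equation*}
\int_0^T\left(\int_{\R^N}|\nabla u_n|^2\,d\mu-\int_{\R^N}V_nu_n^2\,d\mu\right)dt
\le\frac12\|u_0\|_{L^2_\mu}^2,
\end{equation*}
and in the critical situations this theorem is designed for, both terms diverge as $n\to\infty$. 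This is not a technicality: for $\mu=1$ and $\tilde V=\bigl(\frac{N-2}{2}\bigr)^2|x|^{-2}$ (where $\lambda_1>-\infty$ by the Hardy inequality with optimal constant), the limit solution behaves like $|x|^{-(N-2)/2}$ near the origin, so $\int_{B_1}\tilde Vu^2\,dx=+\infty$ while the linear term $\int\tilde Vu\,\phi$ is finite; the quantity you propose to bound uniformly is infinite in the limit. The correct argument (the one in \cite{CabreMartel} and \cite{CGRT}) never goes through the quadratic form: test the weak formulation of the truncated problem with a \emph{nonnegative} admissible $\phi$,
\begin{equation*}
\int_0^T\!\!\int_{\R^N}u_n(-\partial_t\phi-L\phi)\,d\mu\,dt-\int_{\R^N}u_0\,\phi(\cdot,0)\,d\mu
=\int_0^T\!\!\int_{\R^N}V_nu_n\phi\,d\mu\,dt;
\end{equation*}
the left-hand side converges because $u_n\to u$ in $L^2_\mu$ locally uniformly in time, so by monotone convergence (here $V_nu_n\phi\ge0$ is nondecreasing in $n$, which fails for sign-changing $\phi$ --- another slip in your sketch) the right-hand side has a finite limit. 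Choosing $\phi$ to dominate the indicator of $B_R\times(0,T)$ yields $\tilde Vu\in L^1(B_R\times(0,T),d\mu\,dt)$, and general test functions are then handled by dominated convergence with majorant $\tilde Vu\,|\phi|$.
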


\smallskip

The existence result below relies on the Theorem \ref{Thm wiHi}
and on the Theorem \ref{theor as CM}.

\smallskip

\begin{thm}
Assume hypotheses $ii)$ in $H_1)$, $H_2)$-- $H_5)$.
Then there exists a positive weak solution $u\in C([0,\infty),L^2_\mu)$ of $(P)$
satisfying
\begin{equation*}
\|u(t)\|_{L^2_\mu}\le Me^{\omega t}\|u_0\|_{L^2_\mu},\quad t\ge0
\end{equation*}
for some constants $M\ge 1$, $\omega \in \R $.
\end{thm}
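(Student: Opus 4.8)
The plan is to deduce the result by feeding the improved weighted Hardy inequality of Theorem~\ref{Thm wiHi} into the abstract Cabré--Martel type existence result of Theorem~\ref{theor as CM}. First I would fix the perturbation as the sharp inverse-square potential plus $V$, namely
\begin{equation*}
\tilde V(x)=\frac{(N+K_2-2)^2}{4}\,\frac{1}{|x|^2}+V(x),
\end{equation*}
so that the quadratic form of $\tilde V$ is precisely the left-hand side of (\ref{wiHi}). Since $V\ge 0$ by $H_3)$ and the inverse-square term is nonnegative, we have $\tilde V\ge 0$; moreover $V\in L^1_{loc}(\R^N)$ by $H_3)$ and $|x|^{-2}\in L^1_{loc}(\R^N)$ in the relevant dimensions $N\ge 3$, whence $0\le\tilde V\in L^1_{loc}(\R^N)$. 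The structural hypotheses on $\mu$ demanded by Theorem~\ref{theor as CM} are contained in the standing assumptions $ii)$ in $H_1)$ and $H_2)$--$H_5)$, so the abstract theorem is applicable once the spectral condition is checked.

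The central step is to show that the bottom of the spectrum is finite. Substituting $\tilde V$ in the Rayleigh quotient defining $\lambda_1(L+\tilde V)$, the numerator becomes
\begin{equation*}
\int_{\R^N}|\nabla\varphi|^2\,d\mu-\frac{(N+K_2-2)^2}{4}\int_{\R^N}\frac{\varphi^2}{|x|^2}\,d\mu-\int_{\R^N}V\,\varphi^2\,d\mu,
\end{equation*}
which by Theorem~\ref{Thm wiHi} is bounded below by $-K_1\int_{\R^N}\varphi^2\,d\mu$ for every $\varphi\in H^1_\mu$. Dividing by the positive quantity $\int_{\R^N}\varphi^2\,d\mu$ and passing to the infimum over $\varphi\in H^1_\mu\setminus\{0\}$ yields
\begin{equation*}
\lambda_1(L+\tilde V)\ge -K_1>-\infty .
\end{equation*}

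With $\lambda_1(L+\tilde V)>-\infty$ established and $0\le\tilde V\in L^1_{loc}(\R^N)$ verified, I would invoke Theorem~\ref{theor as CM} directly: it furnishes a positive weak solution $u\in C([0,\infty),L^2_\mu)$ of $(P)$ together with the exponential bound $\|u(t)\|_{L^2_\mu}\le Me^{\omega t}\|u_0\|_{L^2_\mu}$ for suitable $M\ge 1$ and $\omega\in\R$, which is exactly the assertion. There is no genuine analytic obstacle here, since both ingredients are already in hand; the only point requiring care is the bookkeeping that confirms $\tilde V$ is admissible (nonnegative and locally integrable) and that the hypotheses invoked by Theorem~\ref{theor as CM} are indeed among those assumed, so that the lower bound $\lambda_1(L+\tilde V)\ge -K_1$ produced by Theorem~\ref{Thm wiHi} supplies the single nontrivial input to the existence machinery.
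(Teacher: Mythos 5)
Your proposal is correct and follows exactly the paper's own route: take $\tilde V$ to be the inverse-square term plus $V$, use the improved Hardy inequality (\ref{wiHi}) of Theorem \ref{Thm wiHi} to get $\lambda_1(L+\tilde V)\ge -K_1>-\infty$, and then invoke Theorem \ref{theor as CM}. The paper's proof is just a two-line version of this argument, so your write-up simply makes explicit the bookkeeping (nonnegativity and local integrability of $\tilde V$, the lower bound on the Rayleigh quotient) that the paper leaves implicit.
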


\begin{proof}
The weighted Hardy inequality (\ref{wiHi}) implies that
$\lambda_1(L+ {\tilde V})>-\infty$. Then the result is a consequence 
of the Theorem \ref{theor as CM}.
\end{proof}

\bigskip\bigskip

\bigskip\bigskip

{\it email address}: $\>$ acanale@unisa.it


\begin{thebibliography}{22}

\bibitem{Adimur}
Adimurthi, N. Chaudhuri, M. Ramaswamy, \textit{An improved Hardy-Sobolev inequality
and its applications}, Proc. Amer. Math. Soc. 130 (2) (2002), pp. 489--505.

\bibitem{alb-lor-man}
A. Albanese, L. Lorenzi, E. Mangino, \textit{$L^p$–uniqueness for elliptic operators with unbounded 
coefficients in $\R^N$}, J. Funct. Anal.  256 (4) (2009), pp. 1238--1257.

\bibitem{BV}
H. Brezis, J.L. Vazquez, \textit{Blow-up solutions  of some nonlinear elliptic problems},
 Rev. Mat. Complut. Madrid  10 (2) (1997), pp. 443--469.

\bibitem{CabreMartel}
X. Cabr\'e, Y. Martel, \textit{Existence versus explosion instantan\'ee pour 
des e\'quations de la chaleur line\'aires 
avec potentiel singulier}, C. R. Acad. Sci. Paris 329 (11) (1999), pp. 973--978.

\bibitem{CA JMAA}
A. Canale,
\textit{A priori bounds in weighted spaces},
J. Math. Anal. Appl.  287 (2003), pp. 101--117. 

\bibitem{CA JIM 2007}
A. Canale,
\textit{On some results in weighted spaces under Cordes type conditions},
J. Interdiscip. Math. 10 (2007), pp. 245--261.

\bibitem{CA JIM 2008}
A. Canale,
\textit{$L^\infty$ estimates for variational solutions of boundary value problems in unbounded domains},
J. Interdiscip. Math. 11 (2008), pp. 127--139. 

\bibitem{CA MIA}
A. Canale,
\textit{Bounds in spaces of Morrey under Chicco type conditions},
Math. Inequal. Appl. 12 (2009), pp. 265--278. 

\bibitem{CA embedding}
A. Canale,
\textit{An embedding result},
J. Interdiscip. Math.  17 (2014), pp. 199--206. 

\bibitem{CA Hardy type}
A. Canale,
\textit{A class of weighted Hardy type inequalities in $\R^N$},
Ric. Mat. (2021), DOI: 10.1007/s11587-021-00628-7. 

\bibitem{CA improved}
A. Canale,
\textit{Local and non-local improved Hardy inequalities with weights},
Accad. Naz. Lincei Rend. Lincei Mat. Appl.
33 (2022), n. 2, pp. 385-398. 

\bibitem{CGRT}
A. Canale, F. Gregorio, A. Rhandi, C. Tacelli, \textit{Weighted Hardy's inequalities and Kolmogorov-type operators}, 
Appl. Anal. 98 (7) (2019), pp. 1236--1254. 


\bibitem{CMR}
A. Canale, R. M. Mininni, A. Rhandi, \textit{Analytic approach to solve a degenerate parabolic PDE  
for the Heston model}, Math. Meth. Appl. Sci. 40 (2017), pp. 4982--4992.


\bibitem{CP}
A. Canale, F. Pappalardo, \textit{Weighted Hardy inequalities and Ornstein-Uhlenbeck type operators perturbed 
by multipolar inverse square potentials}, J. Math. Anal. Appl. 463 (2018), pp. 895--909.

\bibitem{CPT1}
A. Canale, F. Pappalardo, C. Tarantino, \textit{A class of weighted Hardy inequalities and applications 
to evolution problems}, Ann. Mat. Pura Appl. 199 (2020), pp.1171-1181. 

\bibitem{CPT2}
A. Canale, F. Pappalardo, C. Tarantino, \textit{Weighted multipolar Hardy inequalities and evolution problems 
with Kolmogorov operators perturbed by singular potentials}, 
Commun. Pure Appl. Anal. 20 (2021), pp. 405--425. 

\bibitem{CTar}
A. Canale, C. Tarantino,
\textit{Morrey type spaces and multiplication operators in Sobolev spaces},
 arXiv: 1412.6778v1, (21 dicembre 2014). 

\bibitem{D}
 E.B. Davies, \textit{A review of Hardy inequalities}, in "The Maz’ya Anniversary Collection", 
 vol. 2 (Rostock, 1998), in  "Oper. Theory Adv. Appl.", 110, Birkhäuser, Basel, (1999), 55--67. 

\bibitem{Filippas Ter}
S. Filippas, A. Tertikas, \textit{Optimizing improved Hardy inequalities}, J. Funct. Anal. 192  (2002), pp. 186--233.  

\bibitem{Gouss}
N. Ghoussoub, A. Moradif, \textit{On the best possible remaing term in the
Hardy inequality}, Pro. Natl. Acad. Sci. USA  105 (37) (2008), pp. 13746--13751.

\bibitem{GGR}
G. R. Goldstein, J. A. Goldstein, A. Rhandi, \textit{Weighted Hardy's inequality and 
the Kolmogorov equation perturbed by an inverse-square potential}, 
Appl. Anal. 91 (11) (2012), pp. 2057--2071.

\bibitem{Hardy20}
G. H. Hardy,  \textit{Note on a theorem of Hilbert},
Math. Z. 6 n.3-4 (1920),  314--317.

\bibitem{Hardy25}
G. H. Hardy,  \textit{Notes on some points in the integral calculus LX: An inequality between integrals}, Messenger Math., 54 (1925), 150--156. 

\bibitem{HLP}
G. H. Hardy, J. E. Littlewood and G. P´olya, "Inequalities", Reprint of the 1952 edition. Cambridge Mathematical Library,  Cambridge University Press, 1988. 

\bibitem{KMP}
A. Kufner, L. Maligranda, L. Persson, 
"The Hardy Inequality: About Its History and Some Related Results", Vydavatelsý Servis, Plzen, 2007.

\bibitem{KO}
 A. Kufner, B. Opic, "Hardy-Type Inequalities", Pitman Research Notes in Math., vol. 219, Longman, Harlow, 1990. 

\bibitem{M}
E. Mitidieri, {\it{A simple approach to Hardy inequalities}}, Math. Notes 67 (4) 
(2000), pp. 479-486.

\bibitem{Musina}
R. Musina, \textit{A note on the paper "Optimizing improved Hardy inequalities" by 
S. Filippas and A. Tertikas}, J. Funct. Anal. 256 (2009), pp. 2741--2745.

\bibitem{T}
J. M. T\"olle, 
\textit{Uniqueness of weighted Sobolev spaces with weakly differentiable weights}, 
J. Funct. Anal. 263 (2012), pp. 3195--3223.

\end{thebibliography}
\end{document}